\documentclass[11pt,a4paper, final, twoside]{article}
\usepackage{amsmath}
\usepackage{fancyhdr}
\usepackage{amsthm}
\usepackage{amsfonts}
\usepackage{amssymb}
\usepackage{amscd}
\usepackage{latexsym}
\usepackage{amsthm}
\usepackage{graphicx}
\usepackage{graphics}
\usepackage{afterpage}
\usepackage[colorlinks=true, urlcolor=blue,  linkcolor=black, citecolor=black]{hyperref}
\usepackage{color}
\usepackage{bm}
\usepackage{tcolorbox}
\setcounter{MaxMatrixCols}{10}

\renewcommand{\thefootnote}{}
\setlength{\oddsidemargin}{1pt} \setlength{\evensidemargin}{1pt}
\setlength{\hoffset}{-1in} \addtolength{\hoffset}{35mm}
\setlength{\textwidth}{140mm} 
\setlength{\marginparsep}{0pt} \setlength{\marginparwidth}{0pt}
\setlength{\topmargin}{0pt}
\setlength{\voffset}{-2in} \addtolength{\voffset}{20mm}
\setlength{\textheight}{200mm}
\setlength{\headheight}{45mm}
\setlength{\headsep}{5mm}
\setlength{\footskip}{15mm}
\pagestyle{fancy}
\fancyhead{} \fancyfoot{} 


\newcommand{\HRule}{\rule{\linewidth}{1pt}}

\newtheorem{thm}{Theorem}[section]

\theoremstyle{proposition}
\newtheorem{prop}{Proposition}[section]

\newtheorem{conjecture}[thm]{Conjecture}

\theoremstyle{definition}

\theoremstyle{remark}

\numberwithin{equation}{section}
\pagenumbering{arabic} 
\setcounter{page}{0}
%
%
\newcommand{\pkn}{p_{k,n}}
\newcommand{\Pkn}{P_{k,n}}
\newcommand{\pin}{p_{i,n}}
\newcommand{\fact}{\,!}
\newcommand{\E}{\mathrm{e}}
\newcommand{\st}{\ensuremath{^\mathrm{st}}}
\newcommand{\nd}{\ensuremath{^\mathrm{nd}}}
\newcommand{\rd}{\ensuremath{^\mathrm{rd}}}
\renewcommand{\th}{\ensuremath{^\mathrm{th}}}

\newcommand{\for}{\quad\text{for}\quad}
\newcommand{\with}{\quad\text{with}\quad}

\newcommand{\folgt}{\ \Longrightarrow\ }
\newcommand{\half}{\ensuremath{\textstyle\frac{1}{2}}}
\newcommand{\quarter}{\ensuremath{\textstyle\frac{1}{4}}}
\newcommand{\Exp}[1]{\mathbb{E}\,[#1]}
\newcommand{\Var}[1]{\mathbb{V}\,[#1]}
\newcommand{\Gnsq}{G_n^{\,2}}
\newcommand{\ntoinf}{n\to\infty}
\newcommand{\X}{\bm{X}}
\newcommand{\N}{\mathbb{N}}
\newcommand{\beq}{\begin{equation}}
\newcommand{\eeq}{\end{equation}}

\begin{document}
\hyphenpenalty=100000

\begin{flushright}

{\Large \textbf{A new discrete distribution arising from a generalised\\[1mm] random game and its asymptotic properties}}\\[5mm]
{\large \textbf{R.~Fr\"{u}hwirth$^\mathrm{*1}$\footnote{\emph{*Corresponding author: E-mail: rudolf.fruehwirth@oeaw.ac.at}}, R.~Malina$^\mathrm{2}$ and W. Mitaroff\,$^\mathrm{1}$}}\\[1mm]
$^\mathrm{1}${\footnotesize \it Institute of High Energy Physics, Austrian Academy of Sciences, Vienna, Austria}\\[1mm] $^\mathrm{2}${\footnotesize \it CH-8330 Pf\"{a}ffikon (ZH), Switzerland}
\end{flushright}

\noindent\HRule\\[0mm]

%

\noindent{\Large \textbf{Abstract}}\\[4mm]
\fbox{%
\begin{minipage}{5.4in}{\footnotesize
The rules of a game of dice are extended to a ``hyper-die'' with $n\in\mathbb{N}$ equally probable faces, numbered from 1 to $n$. We derive recursive and explicit expressions for the probability mass function and the cumulative distribution function of the gain $G_n$ for arbitrary values of $n$. A numerical study suggests the conjecture that for $n \to \infty$ the expectation of the scaled gain $\Exp{H_n}=\Exp{G_n/\sqrt{n}\,}$ converges to $\sqrt{\pi/\,2}$.
The conjecture is proved by deriving an analytic expression of the expected gain $\Exp{G_n}$.
An analytic expression of the variance of the gain $G_n$ is derived by a similar technique. Finally,  it is proved that $H_n$ converges weakly to the Rayleigh distribution with scale parameter~1.}\end{minipage}}\\[3mm]
\footnotesize{\it{Keywords:} Random game; Mlynar distribution; Expected gain; Asymptotic behaviour; Weak\\ convergence; Rayleigh distribution}\\[1mm] 
\footnotesize{{2010 Mathematics Subject Classification:} 00A08; 60E05}

\afterpage{
\fancyhead{} \fancyfoot{} 
\fancyfoot[R]{\footnotesize\thepage}
}

\small

\section{Introduction}
\label{sec:intro}

The German popular science journal ``Bild der Wissenschaft'' features a monthly column, ``Heinrich Hemme's Cogito'', posing a mathematical or logical puzzle. The November 2015 column presented the rules of a game of dice, called ``Mlyn\'a\v{r}'' (``Miller'' in Czech)~\cite{Hemme}, allegedly popular in Bohemia, but actually invented by the author~\cite{Hemme1}. The puzzle asked for calculating a player's average or expected gain.

The game is played with a standard die having faces numbered 1 to 6. 
Each player throws the die up to five times and collects a gain or a penalty $G_6$
according to the following rules:
\begin{itemize}
\item If $1\st$ throw shows a [1], the gain is $G_6=1$ point, and the player stops. Else: 
\item  \vspace{-5pt} 
If $2\nd$ throw shows [1] or [2], the gain is $G_6=2$ points, the player stops. Else:
\item  \vspace{-5pt} 
If $3\rd$ throw shows [1] to [3], the gain is $G_6 = 3$ points, the player stops. Else:
\item  \vspace{-5pt} 
If $4\th$ throw shows [1] to [4], the gain is $G_6 = 4$ points, the player stops. Else:
\item  \vspace{-5pt} 
If $5\th$ throw shows [1] to [5], the gain is $G_6 = 5$ points, the player stops. Else:
\item  \vspace{-5pt} 
Without a further throw, the gain is $G_6 = 6$ points, and the player stops.
\end{itemize}

Assuming a fair die, the probability of each face showing up is ${1}/{6}$. 
Hence, the probabilities $p_{k,6}=\mathbb{P}\,(G_6 = k)$ of a player gaining $k$ points can be easily computed, see Eqs.~(\ref{equ:pkrec})--(\ref{equ:pkpro2}).
The expected value of the gain, i.e., the average number of points a player will collect in the long run, is defined by
\begin{equation}
\label{equ:exp6}
\Exp{G_6} = \sum_{k = 1}^6 \, k\cdot p_{k,6} = 2.7747 \: \mathrm{points,}
\end{equation}
where the $p_{k,6}$ are calculated by one of Eqs.~(\ref{equ:pkrec})--(\ref{equ:pkpro2}) for $n = 6$. 
They are plotted in Fig.~\ref{fig:pkn}(a). 

\section{Generalisation} 
\label{sec:general} 
\setcounter{footnote}{0}
\renewcommand{\thefootnote}{\arabic{footnote}}

An interesting one-parametric discrete distribution arises if the game is generalised by replacing the ordinary die with a ``hyper-die'' having $n\in\N$ equally probable faces and formalising the rules in the following way. 

Let $\X=\{X_k,\ k\geq 1\}$ be a stochastic process such that the $X_k$ are independent and identically distributed according to the discrete uniform distribution on the set $\{1,2,\ldots,n\}$. Let $G_n$ be the gain associated with $\X$\!, and $\tau$ be a stopping time with respect to $\X$.\footnote{For an elementary definition of the stopping time, see~\cite{Stopping}.} Then the game is defined by the following rule:
\beq\label{equ:defgame}
\text{\bf Rule of the game:}\quad\mathrm{For}\ k=1,2,\ldots,n:\quad X_k\leq k \folgt \tau=k,\ G_n=k.
\eeq
It follows that the probability mass 
function $\pkn=\mathbb{P}\,(G_n=k)$ of the gain $G_n$ is recursively defined by 
\begin{equation}
\label{equ:pkrec}
p_{1,n} = \frac{1}{n} \, , \quad  
\pkn = \left( 1 - \sum_{i = 1}^{k-1} \, \pin \right) \cdot \frac{k}{n} = 
p_{k-1,n} \cdot k \cdot \left( \frac{1}{k - 1} - \frac{1}{n} \right) 
\for 2 \le k \le n.
\end{equation}
\indent
The last probability is $p_{n,n} = 1 - \sum_{i = 1}^{n-1} \pin$, and the sum of all probabilities is $\sum_{k = 1}^{n} \, \pkn = 1$ as required. 
The probabilities in Eq.~(\ref{equ:pkrec}) can also be expressed explicitely by 
\begin{equation}
\label{equ:pkpro1}
\pkn = \frac{1}{n} \cdot \prod_{i = 2}^k \, i \cdot \left( \frac{1}{i - 1} - \frac{1}{n} \right) 
\for 2 \le k \le n,
\end{equation}
or more conveniently by
\begin{equation}
\label{equ:pkpro2}
\pkn = \frac{k}{n^k} \cdot \frac{(n - 1)\fact}{(n - k)\fact}
\for 1 \le k \le n.
\end{equation}
While the recursive definition in Eq.~(\ref{equ:pkrec}) is well suited to numerical computations (see below and Section~\ref{sec:empirical}), the expression in Eq.~(\ref{equ:pkpro2}) is the most useful one for the further theoretical analysis (see Sections~\ref{sec:proof} and~\ref{sec:variance}). 

To the best of our knowledge, this distribution has not been described in the literature before. We propose to call it the ``Mlynar distribution'' with parameter $n$. As another example, the probability mass function for the case $n = 25$  is plotted in Fig.~\ref{fig:pkn}(b).

\begin{figure}[h]
\centering
\scalebox{0.90}{\includegraphics[trim=10mm 0mm 0mm 0cm,clip]{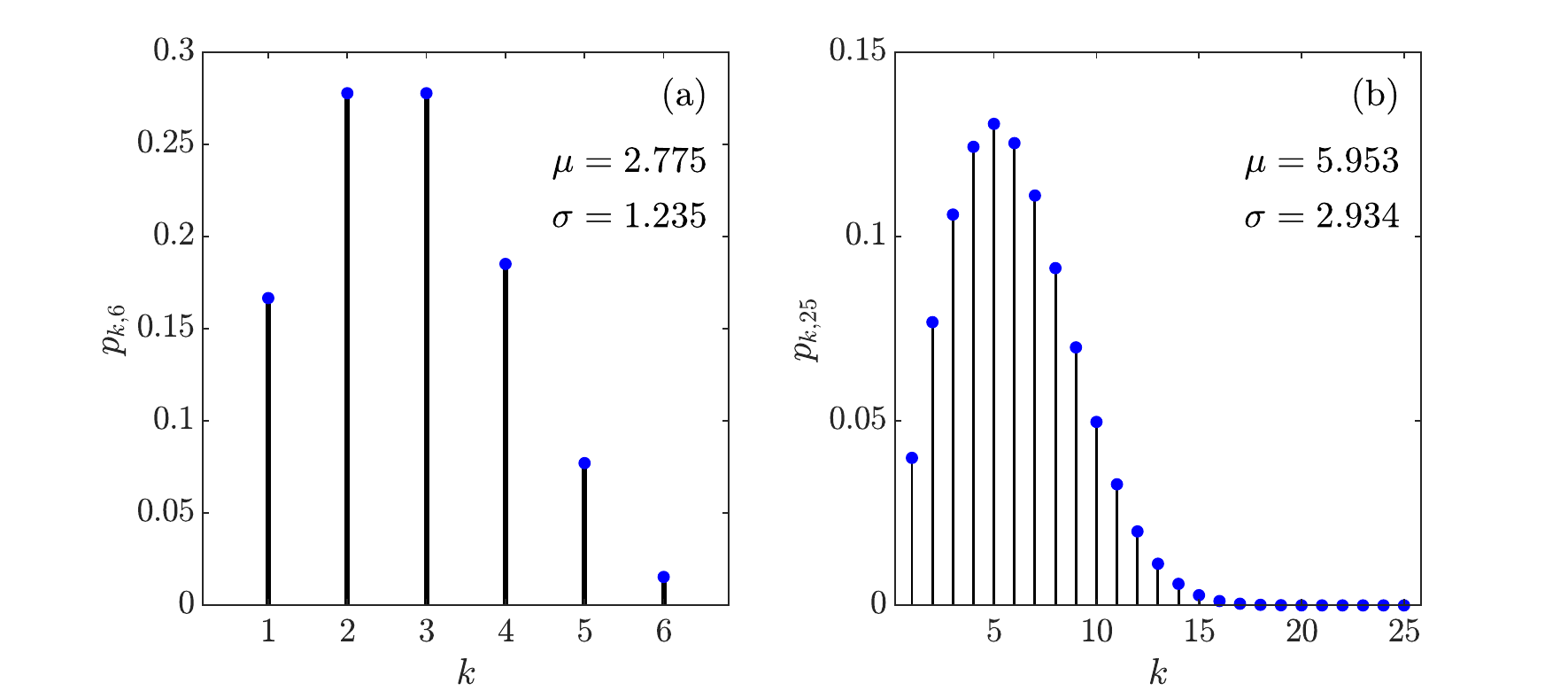}} 
\caption[]{Probabilities $\pkn$ for (a) $n = 6$, (b) $n=25$ } 
\label{fig:pkn} 
\end{figure}

The cumulative distribution function (cdf) $P_n(x)=\mathbb{P}\,(G_n\leq x)$ of $G_n$ is obtained by summing over the probabilities in Eq.~(\ref{equ:pkpro2}) up to $\chi=\lfloor x \rfloor$, the largest integer that does not exceed $x$:
\begin{equation}
\label{equ:Gncdf}
P_n(x)=\sum_{i=1}^\chi\,\pin = 1-\frac{(n-1)\fact}{n^\chi\,(n-1-\chi)\fact} \with \chi=\lfloor x \rfloor.
\end{equation}

The mode of the distribution can be determined by rewriting Eq.~(\ref{equ:pkrec}) in the following form:
\begin{equation}\label{equ:pkmode}
\pkn = p_{k-1,n}\cdot\phi_{k,n},\with\phi_{k,n}= \frac{nk-(k-1)\,k}{nk-n}\for 2 \le k \le n.
\end{equation}
It is easily verified that $\phi_{k,n}$ is a monotonically decreasing function of $k$ for fixed $n$. Hence, the probability mass function $\pkn$ is unimodal and the mode $m$ is the largest integer $k$ such that $\phi_{k,n} \ge 1$.

If $n$ can be expressed as $n = (m - 1) \cdot m$, then $\phi_{m,n} = 1$, and both values
$m - 1$ and $m$ are modes; see for example Fig.~\ref{fig:pkn}(a) 
where $n = 6 = 2 \cdot 3$. Otherwise, the mode $m$ is the floor of the positive solution of the quadratic equation $k^2-k=n$:
\begin{equation}\label{equ:pkmode1}
m=\left\lfloor\half+\sqrt{\quarter+n}\right\rfloor.
\end{equation}
As an example, see Fig.~\ref{fig:pkn}(b) 
where $n = 25$ and $m = \lfloor 5.5249\ldots \rfloor = 5$.

The expectation of $G_n$ is given by
\begin{equation}
\label{equ:funcf}
g(n) = \Exp{G_n} = \sum_{k = 1}^n \, k\cdot\pkn=(n - 1)\fact \cdot \sum_{k = 1}^n \, \frac{k^2}{n^k \cdot (n - k)\fact}.
\end{equation}
Using Eq.~(\ref{equ:pkrec}), the function $g(n)$ can be calculated in double precision floating point arithmetic without numerical problems for $n$ up to $10^{15}$. As can be seen in Fig.~\ref{fig:pkn}(b), the $\pkn$ fall off very quickly in the tail of the distribution already for $n=25$. If $K(n)$ is the index in the sum in Eq.~(\ref{equ:funcf}) beyond which addition of another term has no effect because of rounding errors, then $K(n)\leq 8.5\sqrt{n}$ for $1\leq n\leq 10^{15}$.

As guessed  initially and proved 
below in Section~\ref{sec:proof}, $g(n)\sim C\sqrt{n}$ as $\ntoinf$, with $C\in\mathbb{R}$. Therefore, a scaled expectation $h(n)$ is defined as 
\begin{equation}
\label{equ:funch}
h(n) = \Exp{H_n} = \frac{g(n)}{\sqrt{n}}, \with  H_n = \frac{G_n}{\sqrt{n}}.
\end{equation}
$H_n$ will be called the scaled gain in the following. The functions $g(n)$ and $h(n)$ are plotted 
in Fig.~\ref{fig:fnhn25}(a) and (b), respectively, in the range $1\le n\le25$. 

\begin{figure}[h!t]
\centering
\scalebox{0.90}{\includegraphics[trim=10mm 2mm 0mm 0cm,clip]{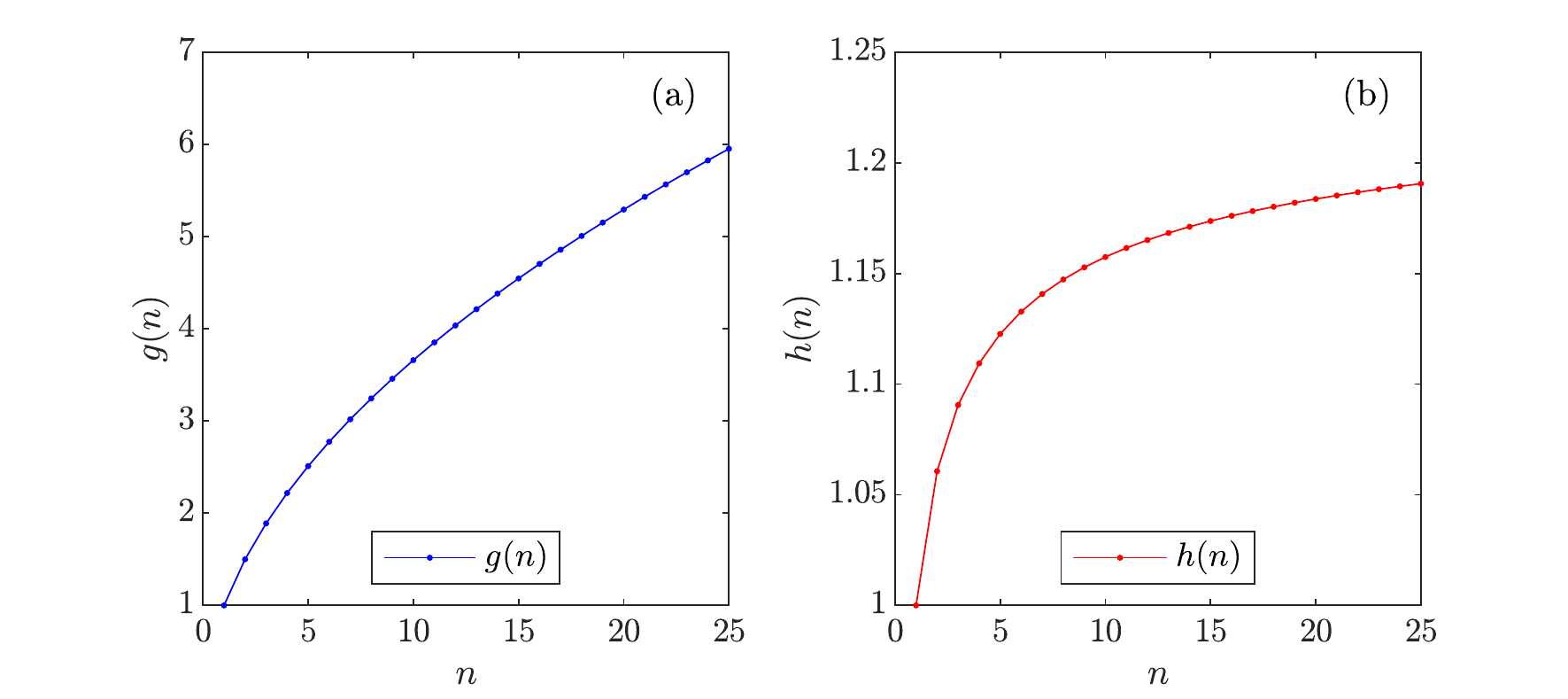}}
\caption[]{(a) $g(n)$ up to $n = 25$. (b) $h(n)$ up to $n = 25$.} 
\label{fig:fnhn25} 
\end{figure}

\section{Empirical conjecture} 
\label{sec:empirical} 

The function $h(n)$ is monotonically increasing and apparently converges to a 
non-zero constant value, as shown in Fig.~\ref{fig:hnbig}(a) up to $n = 10^{10}$, with a logarithmic scale on the abscissa. 
A surprising  observation reveals this constant to be  empirically equal 
(within our numerical precision) to $\sqrt{\pi / 2}$. This inspires us to the following conjecture: 
\begin{conjecture}\label{conj:1} 
The scaled expectation value $h(n)$ converges for $n \to \infty$ as 
\begin{equation}
\label{equ:conj1}
\lim_{n \to \infty} h(n) = \lim_{n \to \infty} \Exp{H_n} 
= \sqrt{\frac{\pi}{2}} = 1.2533 \dots .
\end{equation}
\end{conjecture}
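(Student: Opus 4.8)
\medskip
\noindent The plan is to replace the combinatorial sum defining $g(n)=\Exp{G_n}$ by a genuine closed analytic form --- the ``analytic expression of the expected gain'' announced in the abstract --- and then read off the limit from Stirling's formula together with the central limit theorem for the Poisson law. First I would not attack $\sum_k k\,\pkn$ directly, but instead use the tail-sum identity $\Exp{G_n}=\sum_{k\ge1}\mathbb{P}\,(G_n\ge k)$. By the rule of the game~(\ref{equ:defgame}) the event $\{G_n\ge k\}$ is exactly $\{X_1>1,\,X_2>2,\,\ldots,\,X_{k-1}>k-1\}$, so by independence (equivalently, from~(\ref{equ:Gncdf}) with $\chi=k-1$)
\[
\mathbb{P}\,(G_n\ge k)=\prod_{j=1}^{k-1}\frac{n-j}{n}=\frac{(n-1)!}{n^{k-1}(n-k)!}\qquad(1\le k\le n),
\]
and $\mathbb{P}\,(G_n\ge k)=0$ for $k>n$. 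Summing over $k$ and substituting $j=n-k$ in the resulting sum gives
\[
g(n)=\sum_{k=1}^{n}\frac{(n-1)!}{n^{k-1}(n-k)!}=\frac{(n-1)!}{n^{n-1}}\sum_{k=0}^{n-1}\frac{n^{k}}{k!}=\frac{n!}{n^{n}}\sum_{k=0}^{n-1}\frac{n^{k}}{k!}=\frac{n\,\E^{\,n}\,\Gamma(n,n)}{n^{n}},
\]
with $\Gamma(\cdot,\cdot)$ the upper incomplete gamma function. (Sanity check: $n=6$ gives $g(6)=\frac{720}{46656}\cdot\frac{1799}{10}=2.7747$, matching~(\ref{equ:exp6}).)

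For the asymptotics I would write
\[
h(n)=\frac{g(n)}{\sqrt{n}}=\frac{n!\,\E^{\,n}}{n^{\,n+1/2}}\cdot\Big(\E^{-n}\sum_{k=0}^{n-1}\frac{n^{k}}{k!}\Big)=:A_n\,B_n ,
\]
observe $A_n\to\sqrt{2\pi}$ by Stirling, and identify $B_n=\mathbb{P}\,(N_n\le n-1)$ with $N_n\sim\mathrm{Poisson}(n)$. Since $N_n$ has the law of a sum of $n$ i.i.d.\ $\mathrm{Poisson}(1)$ variables (mean and variance $1$), the central limit theorem gives $B_n\to\tfrac12$. The one place where a careless argument goes wrong is precisely this step: the standardised threshold is $(n-1-n)/\sqrt{n}=-1/\sqrt{n}$, which is not fixed, so the CLT cannot be quoted verbatim and I would sandwich instead --- for every $\varepsilon>0$ and all large $n$, $\mathbb{P}\big(\tfrac{N_n-n}{\sqrt n}\le-\varepsilon\big)\le B_n\le\mathbb{P}\big(\tfrac{N_n-n}{\sqrt n}\le0\big)$; letting $\ntoinf$ and then $\varepsilon\downarrow0$ squeezes $B_n\to\Phi(0)=\tfrac12$. (A Berry--Esseen bound even gives $B_n=\tfrac12+O(n^{-1/2})$.) Hence $h(n)=A_n B_n\to\sqrt{2\pi}\cdot\tfrac12=\sqrt{\pi/2}$, which is Conjecture~\ref{conj:1}.

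The main obstacle is therefore the first step --- recognising the closed form; the reindexing itself is elementary once it has been guessed from the numerics, and after that everything is standard. It is worth noting that $g(n)=1+\frac{n-1}{n}+\frac{(n-1)(n-2)}{n^{2}}+\cdots+\frac{(n-1)!}{n^{n-1}}$ is exactly Ramanujan's $Q$-function, for which the sharper $g(n)=\sqrt{\pi n/2}-\tfrac13+O(n^{-1/2})$ and indeed a complete asymptotic expansion are classical, so this route also supplies a rate of convergence in~(\ref{equ:conj1}) and explains why $h(n)$ approaches $\sqrt{\pi/2}$ from below. Finally, the same two tools --- the tail representation and Poisson/incomplete-gamma identities --- are what I would use for the companion results: $\Exp{\Gnsq}=\sum_{k\ge1}(2k-1)\,\mathbb{P}\,(G_n\ge k)$ produces a closed form and hence the announced expression for $\Var{G_n}$, and since by~(\ref{equ:Gncdf}) the distribution function of $H_n=G_n/\sqrt{n}$ is $1-\prod_{j=1}^{\lfloor x\sqrt n\rfloor}(1-j/n)\to 1-\E^{-x^{2}/2}$, one obtains the weak convergence of $H_n$ to the Rayleigh distribution with scale $1$.
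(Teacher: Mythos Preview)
Your argument is correct and the first half matches the paper almost line for line: the paper also uses the tail-sum rewriting $\sum_k k\,\pkn=\sum_k \Pkn$ with $\Pkn=\mathbb{P}\,(G_n\ge k)=\dfrac{(n-1)!}{n^{k-1}(n-k)!}$, and arrives at the same closed form (stated there as $g(n)=\E^{n}n^{-n}\Gamma(n+1,n)-1$, which equals your $\dfrac{n\,\E^{n}\Gamma(n,n)}{n^{n}}$ by the recurrence $\Gamma(a+1,x)=a\,\Gamma(a,x)+x^{a}\E^{-x}$).

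Where you diverge is in extracting the limit. The paper simply quotes the ready-made asymptotic $\Gamma(n+1,n)\sim \E^{-n}n^{n}\bigl(\sqrt{n\pi/2}+\tfrac{2}{3}+\cdots\bigr)$ from Abramowitz--Stegun, which immediately gives $h(n)\sim\sqrt{\pi/2}-\tfrac{1}{3\sqrt n}$. Your route --- splitting $h(n)=A_nB_n$ with $A_n\to\sqrt{2\pi}$ by Stirling and $B_n=\mathbb{P}(N_n\le n-1)\to\tfrac12$ via the CLT for a Poisson$(n)$ variable --- is a genuinely different and more self-contained argument: it explains \emph{why} the incomplete-gamma asymptotic takes the form it does, and your identification of $g(n)$ with Ramanujan's $Q$-function places the result in a classical context the paper does not mention. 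The trade-off is that the paper's reference gives the next-order constant $-\tfrac13$ for free, whereas you need Berry--Esseen (or the $Q$-function expansion) to recover the rate; your sandwich for the drifting threshold $-1/\sqrt n$ is exactly the right patch and is cleanly executed.
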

In order to corroborate the conjecture, the difference function 
\begin{equation} 
\label{equ:deln} 
\Delta(n) = \sqrt{\frac{\pi}{2}} - h(n) 
\end{equation} 
is calculated and plotted in Fig.~\ref{fig:hnbig}(b) up to $n = 10^{10}$ with a logarithmic scale on the abscissa. 
\begin{figure}[t]
\centering
\scalebox{0.90}{\includegraphics[trim=0mm 0cm 0mm 0mm,clip]{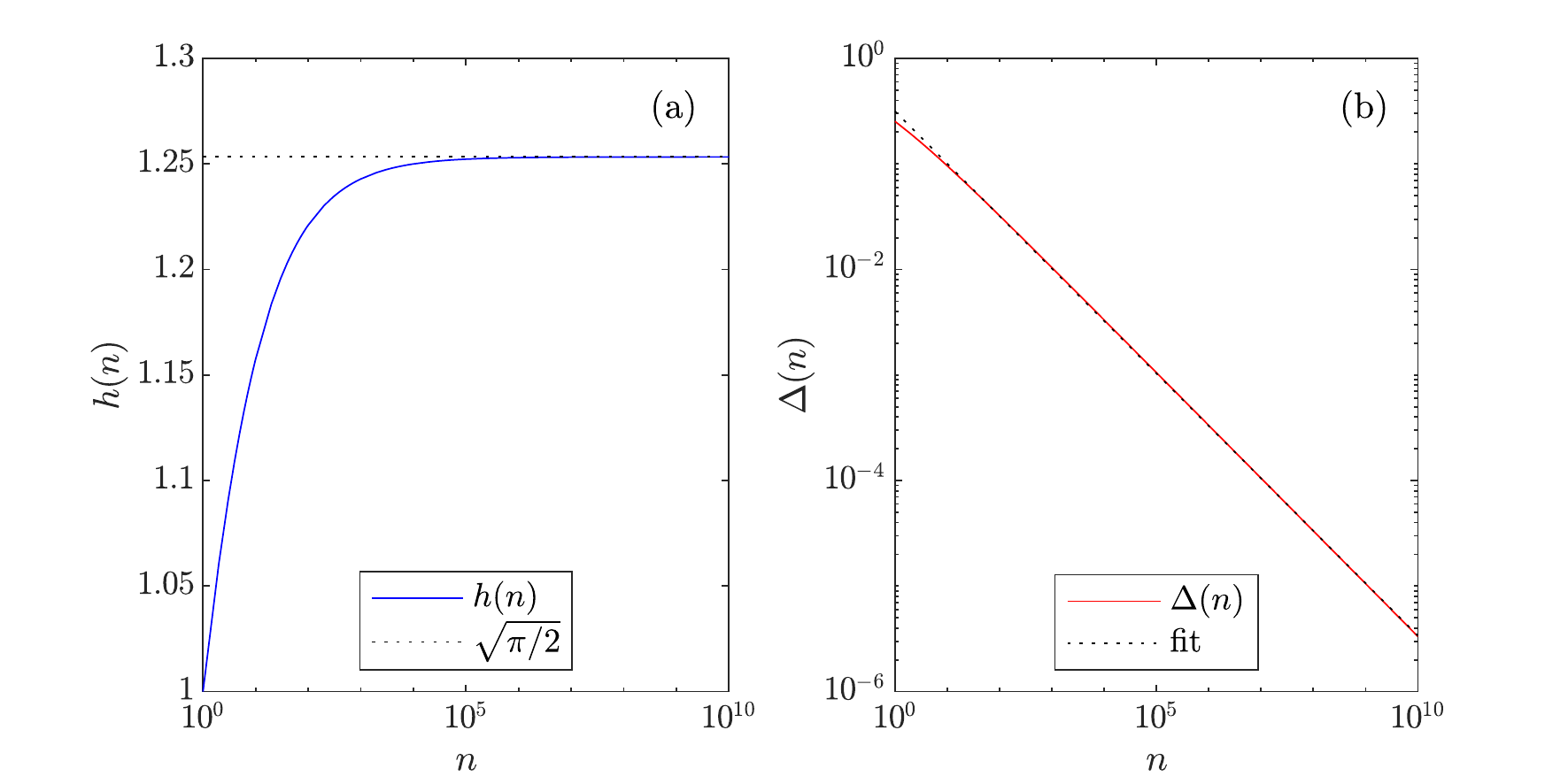}} 
\caption[]{(a) Function $h(n)$. (b) $\Delta(n)$ in Eq.~(\ref{equ:deln}) and the fitted power function.} 
\label{fig:hnbig} 
\end{figure}
Its behaviour above $n = 10$ shows approximate linearity in double log scale, 
suggesting an ansatz $\Delta(n) = c \cdot n^{- s}$ or 
\begin{equation}
\label{equ:ansatz}
\log_{10} \Delta(n) = \alpha + \beta \cdot \log_{10} n, \with \alpha=\log_{10} c,\quad\beta=-s.
\end{equation} 
The intercept $\alpha$ and the slope $\beta$ have been estimated by linear regression~\cite{Lyons} on 10 data points ($n_\mathrm{df}=8$) situated at 
$ \log_{10} n = 1, 2, \ldots, 10$, 
yielding the estimates
\begin{equation}
\label{equ:fits} 
\tilde{\alpha} = -0.4996 \pm 0.0068, \ 
\tilde{\beta} =  - \tilde{s} = - 0.4970 \pm 0.0011,\ 
\tilde{c}=10^{\tilde{\alpha}}=0.3165\pm 0.0050,
\end{equation}
where the errors represent $1\,\sigma$. 
 
Assuming that above ansatz holds beyond $n = 10^{10}$, 
the extrapolation $n \to \infty$ adds strong evidence 
for the exact validity of Eq.~(\ref{equ:conj1}). 
The asymptotic behaviour of $h(n)$ can thus be parametrized, 
within the fitted accuracy, by 
\begin{equation} 
\label{equ:hnfit} 
h(n) = \frac{1}{\sqrt{n}} \cdot \Exp{G_n} \approx 
\sqrt{\frac{\pi}{2}} - \Delta(n), \with \Delta(n) = 0.3165 \cdot n^{- 0.4970}.
\end{equation} 
It is remarkable that within $2.7\,\sigma$, $\tilde{s} \approx 0.5$, 
hinting at $\Delta(n) \propto {1}/{\sqrt{n}}$. 

\section{Proof of the conjecture}
\label{sec:proof}
In order to prove Conjecture~\ref{conj:1}, we first derive an explicit expression for the expected gain $g(n)=\Exp{G_n}$ for fixed $n$. 
\begin{prop} 
For any $n\in\N$, $g(n)=\Exp{G_n}$ is given by
\begin{equation}
\label{equ:thm1}
g(n) = \sum_{k=1}^n\,k\cdot\pkn = \E^n\, n^{-n}\, \Gamma(n+1,n) -1,
\end{equation}
where $\Gamma(a,x)$ is the upper incomplete Gamma function as defined in~\cite{Alpha} and~\cite[Eq.~(6.5.3)]{AS}.
\begin{proof}
The sum in Eq.~(\ref{equ:thm1}) can be rewritten in the following way:
\begin{equation}
\label{equ:rewrite}
\sum_{k = 1}^n\, k\cdot\pkn= \sum_{k = 1}^n\,\sum_{i=k}^n\,\pin.
\end{equation}
\indent
It is easy to see that in the double sum on the right hand side $p_{1,n}$ occurs exactly once, $p_{2,n}$ occurs exactly twice, and so on, up to $p_{n,n}$ which occurs exactly $n$ times. The same is true for the sum on the left hand side.

The resulting double sum in Eq.~(\ref{equ:rewrite}) can be evaluated in two steps, with the help of~\cite{Alpha}. In the first step we obtain
\begin{equation}
\label{equ:thm1step1}
\Pkn=\sum_{i=k}^n\,\pin=\frac{(n-1)\fact}{n^{k-1}\,(n-k)\fact}.
\end{equation}
The second step yields
\begin{equation}
\label{equ:thm1step2}
\Exp{G_n} = \sum_{k=1}^n\,\Pkn = \E^n\, n^{-n}\, \Gamma(n+1,n) -1=g(n).
\end{equation}
This concludes the proof.
\end{proof}
\end{prop}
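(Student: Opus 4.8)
The plan is to avoid summing $k\cdot\pkn$ term by term and to use instead the layer-cake (tail-sum) identity for the expectation of a nonnegative integer-valued random variable. This turns $g(n)$ into a sum of survival probabilities $\mathbb{P}(G_n\ge k)$, each of which has a clean product form; the resulting finite sum is a truncated exponential series, which is exactly the series defining the incomplete Gamma function at an integer argument. \emph{Step 1.} Since $G_n$ is supported on $\{1,\dots,n\}$, I would start from $g(n)=\Exp{G_n}=\sum_{k=1}^{n}\mathbb{P}(G_n\ge k)$; equivalently, writing $k=\sum_{i=1}^{k}1$ and swapping the two finite sums, $\sum_{k=1}^{n}k\,\pkn=\sum_{k=1}^{n}\sum_{i=k}^{n}\pin$, so only the tail masses enter.

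\emph{Step 2.} By the rule of the game \eqref{equ:defgame}, the event $\{G_n\ge k\}$ is precisely the event that the player stops at none of the throws $1,\dots,k-1$, i.e.\ $X_j>j$ for all $j=1,\dots,k-1$. Since the $X_j$ are independent and uniform on $\{1,\dots,n\}$, we have $\mathbb{P}(X_j>j)=(n-j)/n$, and hence
\[
\mathbb{P}(G_n\ge k)=\prod_{j=1}^{k-1}\frac{n-j}{n}=\frac{(n-1)\fact}{n^{k-1}\,(n-k)\fact}.
\]
(This also drops out of \eqref{equ:pkpro2} by a one-line induction on $k$ using the relation $\pkn=\mathbb{P}(G_n\ge k)\cdot k/n$ contained in \eqref{equ:pkrec}.)

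\emph{Step 3.} Substituting $j=n-k$ gives
\[
g(n)=\sum_{k=1}^{n}\frac{(n-1)\fact}{n^{k-1}\,(n-k)\fact}=\frac{(n-1)\fact}{n^{\,n-1}}\sum_{j=0}^{n-1}\frac{n^{\,j}}{j\fact}.
\]
I would then invoke the closed form of the incomplete Gamma function at an integer first argument, $\Gamma(n+1,n)=n\fact\,\E^{-n}\sum_{j=0}^{n}n^{\,j}/j\fact$ (see \cite{AS} or \cite{Alpha}). Peeling off the $j=n$ term gives $\sum_{j=0}^{n-1}n^{\,j}/j\fact=\E^{n}\,\Gamma(n+1,n)/n\fact-n^{\,n}/n\fact$; substituting back and simplifying with $(n-1)\fact/n\fact=1/n$ collapses everything to $g(n)=n^{-n}\bigl(\E^{n}\,\Gamma(n+1,n)-n^{\,n}\bigr)=\E^{n}\,n^{-n}\,\Gamma(n+1,n)-1$, which is \eqref{equ:thm1}.

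\emph{Anticipated difficulty.} There is no genuine obstacle: the only non-mechanical ingredient is recognising the truncated exponential sum $\sum_{j=0}^{n-1}n^{\,j}/j\fact$ as a piece of $\Gamma(n+1,n)$, and the only spot that demands care is the index shift $j=n-k$ together with the off-by-one in the upper summation limit. Should one wish to avoid quoting the closed form of $\Gamma(n+1,n)$, one can instead establish the recurrence $\Gamma(n+1,n)=n\,\Gamma(n,n)+n^{\,n}\E^{-n}$ by a single integration by parts and then conclude by induction, but citing \cite{AS} is shorter.
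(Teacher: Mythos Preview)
Your proposal is correct and follows essentially the same route as the paper's proof: both use the tail-sum identity $\Exp{G_n}=\sum_{k=1}^{n}\mathbb{P}(G_n\ge k)$, obtain the same closed form $\mathbb{P}(G_n\ge k)=(n-1)!/\bigl(n^{k-1}(n-k)!\bigr)$, and then identify the resulting finite sum with $\E^{n}n^{-n}\Gamma(n+1,n)-1$. Your write-up is more self-contained---you derive the survival probability directly from the independence of the throws and explicitly exhibit the truncated exponential series $\sum_{j=0}^{n-1}n^{j}/j!$---whereas the paper delegates both evaluations to a computer algebra system.
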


The asymptotic behaviour of $h(n)$ as $\ntoinf$ is given by the following proposition. 
\begin{prop} 
Let $h(n)$ be as in Eq.~(\ref{equ:funch}). Then, as $\ntoinf$,
\begin{equation}
\label{equ:thm2}
h(n)\sim\sqrt{\frac{\pi}{2}}-\frac{1}{3\sqrt{n}}\quad \Longrightarrow\quad \lim_{\ntoinf} h(n)=\sqrt{\frac{\pi}{2}}.
\end{equation}
\begin{proof}
The asymptotic behaviour of $\Gamma(n+1,n)$ as $\ntoinf$ is given in~\cite[Eq.~(6.5.35)]{AS}:
\begin{equation}
\Gamma(n+1,n)\sim \E^{-n}\,n^n\,\left(\sqrt{\frac{n\,\pi}{2}}+\frac{2}{3}+\frac{\sqrt{2\,\pi}}{24\,\sqrt{n}}+\cdots\right).
\end{equation}
It follows that as $\ntoinf$
\begin{equation}
\label{equ:fnasym}
g(n)\sim -1+\sqrt{\frac{n\,\pi}{2}}+\frac{2}{3}+\frac{\sqrt{2\,\pi}}{24\,\sqrt{n}}+\cdots=
\sqrt{\frac{n\,\pi}{2}}-\frac{1}{3}+\frac{\sqrt{2\,\pi}}{24\,\sqrt{n}}+\cdots.
\end{equation}
Dividing by $\sqrt{n}$ and omitting terms that are $\mathcal{O}(1/n)$ yields Eq.~(\ref{equ:thm2}).
\end{proof}
\end{prop}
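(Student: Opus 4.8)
The plan is to build directly on the closed form just obtained, $g(n)=\E^n\,n^{-n}\,\Gamma(n+1,n)-1$, so that the whole question reduces to the large-$n$ behaviour of the upper incomplete gamma function $\Gamma(n+1,n)$. The shortest route is simply to quote the classical asymptotic expansion
\[
\Gamma(n+1,n)\sim\E^{-n}\,n^n\Bigl(\sqrt{\tfrac{n\pi}{2}}+\tfrac{2}{3}+\tfrac{\sqrt{2\pi}}{24\sqrt n}+\cdots\Bigr)
\]
from \cite[Eq.~(6.5.35)]{AS}: inserting it into the formula for $g(n)$, subtracting $1$, dividing by $\sqrt n$ and dropping terms of order $1/n$ gives $h(n)\sim\sqrt{\pi/2}-1/(3\sqrt n)$ at once, and the stated limit follows by letting $\ntoinf$. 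If a self-contained derivation is wanted, the same expansion can be produced by Laplace's method, as sketched next.

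Write $\Gamma(n+1,n)=\int_n^\infty t^n\,\E^{-t}\,\mathrm{d}t$ and substitute $t=n+v\sqrt n$. Factoring out $\E^{-n}n^n$ and expanding $n\log(1+v/\sqrt n)-v\sqrt n=-v^2/2+v^3/(3\sqrt n)-\cdots$ turns the integral into $\E^{-n}n^n\sqrt n\int_0^\infty\exp\bigl(-v^2/2+v^3/(3\sqrt n)-\cdots\bigr)\,\mathrm{d}v$. Expanding the exponential as $1+v^3/(3\sqrt n)+\cdots$ and using $\int_0^\infty\E^{-v^2/2}\,\mathrm{d}v=\sqrt{\pi/2}$ together with $\int_0^\infty v^3\,\E^{-v^2/2}\,\mathrm{d}v=2$ yields $\E^n n^{-n}\Gamma(n+1,n)=\sqrt{n\pi/2}+2/3+\mathcal{O}(1/\sqrt n)$; hence $g(n)=\sqrt{n\pi/2}-1/3+\mathcal{O}(1/\sqrt n)$ and, dividing by $\sqrt n$, $h(n)=\sqrt{\pi/2}-1/(3\sqrt n)+\mathcal{O}(1/n)$, which is the claim.

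The main obstacle, in the self-contained version, is not the conclusion but making that Laplace expansion rigorous: one has to bound the contribution of the range where $v$ is large (equivalently $t\gg n$), justify integrating the series term by term, and control the remainder uniformly so that it really is $\mathcal{O}(1/\sqrt n)$ — pinning the constant $2/3$ with honest error bounds is where the actual work sits. A parallel way to account for the same constant uses the identity $\Gamma(n+1,n)=n\fact\,\E^{-n}\sum_{k=0}^{n}n^k/k\fact$, which rewrites $\E^n n^{-n}\Gamma(n+1,n)$ as $(n\fact\,\E^n/n^n)\cdot\mathbb{P}\,(Y_n\le n)$ with $Y_n\sim\mathrm{Poisson}(n)$; Stirling's series gives $n\fact\,\E^n/n^n=\sqrt{2\pi n}\,(1+\mathcal{O}(1/n))$, and the first-order Edgeworth correction to the central limit theorem for the lattice variable $Y_n$ gives $\mathbb{P}\,(Y_n\le n)=\tfrac12+\tfrac{2}{3\sqrt{2\pi n}}+\mathcal{O}(1/n)$, the continuity correction and the skewness term combining to produce the $2/3$. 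In either approach, once the second-order asymptotics of $\Gamma(n+1,n)$ is established the proposition is immediate.
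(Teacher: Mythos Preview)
Your primary argument is exactly the paper's: quote the asymptotic expansion \cite[Eq.~(6.5.35)]{AS} for $\Gamma(n+1,n)$, insert it into $g(n)=\E^n n^{-n}\Gamma(n+1,n)-1$, and divide by $\sqrt{n}$. The additional Laplace-method sketch and the Poisson/Edgeworth alternative go beyond what the paper provides and are a nice bonus, but the core proof coincides with the paper's.
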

It should be noted that the empirical result in Eq.~(\ref{equ:hnfit}) is in very good agreement with the assertion of the proposition. 

\section{Variance and asymptotic distribution}
\label{sec:variance}

The variance of $G_n$ can be computed as $\Var{G_n}=\Exp{\Gnsq} - \Exp{G_n}^2$. The following proposition gives an explicit expression for $\Var{G_n}$.

\begin{prop}
The variance of $G_n$ is given by
\begin{equation}
\label{equ:thm3}
\Var{G_n}=2\,n-\Exp{G_n}-\Exp{G_n}^2.
\end{equation}
\begin{proof}
The expectation of $\Gnsq$ can be rewritten in the following form:
\begin{equation}
\Exp{\Gnsq}\,=\,\sum_{k=1}^n k^{2}\cdot\pkn=\sum_{k=1}^n\sum_{i=1}^k\, (2\,i-1)\cdot\pkn =
\sum_{k=1}^n\,(2\,k-1)\cdot\Pkn.
\end{equation}
It is not difficult to verify that in all sums $p_{1,n}$ occurs exactly once, $p_{2,n}$ occurs exactly four times, and so on, up to $p_{n,n}$ which occurs exactly $n^2$ times. With the help of~\cite{Alpha} and using Eq.~(\ref{equ:thm1step2}), the last sum evaluates to
\beq
\Exp{\Gnsq}=2\cdot\sum_{k=1}^n\,k\cdot\Pkn - \sum_{k=1}^n\,\Pkn=2\,n-\Exp{G_n}.
\eeq
Subtracting the squared expectation yields the proposition.
\end{proof}
\end{prop}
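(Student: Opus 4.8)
The plan is to obtain $\Var{G_n}=\Exp{\Gnsq}-\Exp{G_n}^2$ by reducing the computation of $\Exp{\Gnsq}$ to the tail sums $\Pkn=\sum_{i=k}^n\pin=\mathbb{P}\,(G_n\geq k)$, whose closed form is already available from Eq.~(\ref{equ:thm1step1}), and to the value of $\Exp{G_n}$ found in the first proposition. The entry point is the elementary identity $k^2=\sum_{i=1}^k(2i-1)$, which rewrites $\Exp{\Gnsq}=\sum_{k=1}^n k^2\,\pkn$ as the double sum $\sum_{k=1}^n\sum_{i=1}^k(2i-1)\,\pkn$. Interchanging the order of summation collapses the inner sum over $k$ into a tail sum: the double sum equals $\sum_{i=1}^n(2i-1)\sum_{k=i}^n\pkn=\sum_{i=1}^n(2i-1)\,P_{i,n}$, and one should check the bookkeeping (each probability $\pkn$ is collected with total weight $\sum_{i=1}^k(2i-1)=k^2$, matching $\Exp{\Gnsq}$). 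Separating the linear and constant parts then gives $\Exp{\Gnsq}=2\sum_{k=1}^n k\,\Pkn-\sum_{k=1}^n\Pkn$.

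Next I would dispose of the two remaining sums. The second is immediate: $\sum_{k=1}^n\Pkn=\Exp{G_n}$, exactly as in the proof of the expected-gain proposition, cf.\ Eq.~(\ref{equ:thm1step2}). For the first, the key observation is the recursion $\pkn=(k/n)\,\Pkn$ --- which is just Eq.~(\ref{equ:pkrec}) rewritten via $1-\sum_{i=1}^{k-1}\pin=\Pkn$ --- combined with the trivial telescoping identity $\Pkn-P_{k+1,n}=\pkn$. Together these give $k\,\Pkn=n\,(\Pkn-P_{k+1,n})$ for $1\le k\le n$, so that
\[
\sum_{k=1}^n k\,\Pkn=n\,(P_{1,n}-P_{n+1,n})=n,
\]
using $P_{1,n}=\mathbb{P}\,(G_n\geq 1)=1$ and $P_{n+1,n}=0$ (consistently, the reciprocal factorial $1/(n-k)\fact$ in Eq.~(\ref{equ:thm1step1}) vanishes at $k=n+1$). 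Substituting back yields $\Exp{\Gnsq}=2n-\Exp{G_n}$, and subtracting $\Exp{G_n}^2$ produces Eq.~(\ref{equ:thm3}).

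I do not expect a genuine obstacle here; the one step that needs care is the identity $\sum_{k=1}^n k\,\Pkn=n$, which the telescoping argument above settles cleanly. As an alternative one could evaluate that sum directly from the closed form of $\Pkn$ using the upper incomplete Gamma function, in the same way $\sum_{k=1}^n\Pkn$ was treated, but this is longer and brings in special functions that the telescoping route avoids entirely. Finally, I would cross-check Eq.~(\ref{equ:thm3}) against a direct numerical evaluation of $\sum_{k=1}^n k^2\,\pkn-\big(\sum_{k=1}^n k\,\pkn\big)^2$ for small $n$ such as $n=2,3,6$.
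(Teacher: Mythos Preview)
Your proof is correct and follows essentially the same route as the paper: rewrite $k^2=\sum_{i=1}^k(2i-1)$, swap the order of summation to obtain $\Exp{\Gnsq}=\sum_{k=1}^n(2k-1)\Pkn=2\sum_{k=1}^n k\,\Pkn-\sum_{k=1}^n\Pkn$, and then identify the two remaining sums as $n$ and $\Exp{G_n}$ respectively. The one genuine difference is in how $\sum_{k=1}^n k\,\Pkn=n$ is established: the paper simply cites a computer algebra system (\cite{Alpha}) for this evaluation, whereas your telescoping argument via $k\,\Pkn=n\,\pkn=n\,(\Pkn-P_{k+1,n})$ is fully self-contained and arguably cleaner, since it avoids special functions and external tools entirely.
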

The function $v(n)$ is defined by $v(n)=\Var{H_n}=\Var{G_n/\sqrt{n}\,}=\Var{G_n}/n$.
Its asymptotic behaviour as $\ntoinf$ is described by the following proposition.
\begin{prop}
As $\ntoinf$,
\beq
v(n)\sim 2-\frac{\pi}{2}-\sqrt{\frac{\pi}{18\,n}}+\cdots
\quad \Longrightarrow\quad \lim_{\ntoinf} v(n)=2-\frac{\pi}{2}.
\eeq
\begin{proof}
The assertion follows by an elementary calculation from Eqs.~(\ref{equ:fnasym}) and~(\ref{equ:thm3}).
\end{proof}
\end{prop}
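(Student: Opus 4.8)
The plan is to substitute the asymptotic expansion of $g(n)=\Exp{G_n}$ from Eq.~(\ref{equ:fnasym}) into the exact identity for the variance in Eq.~(\ref{equ:thm3}), after rescaling by $n$. First I would divide Eq.~(\ref{equ:thm3}) by $n$ to get
\[
v(n)=\frac{\Var{G_n}}{n}=2-\frac{g(n)}{n}-\frac{g(n)^2}{n},
\]
so that the whole question reduces to controlling $g(n)/n$ and $g(n)^2/n$ up to and including terms of order $n^{-1/2}$.

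Second, I would read off from Eq.~(\ref{equ:fnasym}) that $g(n)\sim\sqrt{n\pi/2}-\tfrac13+\tfrac{\sqrt{2\pi}}{24\sqrt n}+\cdots$, hence $g(n)/n\sim\sqrt{\pi/(2n)}+\mathcal O(1/n)$, where only the leading term of $g(n)$ survives at order $n^{-1/2}$. Squaring the expansion, the term of order $\sqrt n$ in $g(n)^2$ comes solely from twice the product of the leading term $\sqrt{n\pi/2}$ with the constant $-\tfrac13$; the $n^{-1/2}$-term of $g(n)$ only feeds the $\mathcal O(1)$ part of $g(n)^2$ and is therefore irrelevant after division by $n$. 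This gives $g(n)^2/n\sim\tfrac{\pi}{2}-\tfrac23\sqrt{\pi/(2n)}+\mathcal O(1/n)$.

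Third, I would combine the pieces:
\[
v(n)\sim 2-\sqrt{\frac{\pi}{2n}}-\frac{\pi}{2}+\frac{2}{3}\sqrt{\frac{\pi}{2n}}+\mathcal O\!\left(\frac1n\right)
=2-\frac{\pi}{2}-\frac13\sqrt{\frac{\pi}{2n}}+\mathcal O\!\left(\frac1n\right),
\]
and then rewrite $\tfrac13\sqrt{\pi/2}=\sqrt{\pi/18}$ to obtain the stated form $v(n)\sim 2-\pi/2-\sqrt{\pi/(18n)}+\cdots$, from which $\lim_{\ntoinf}v(n)=2-\pi/2$ is immediate. There is no genuine obstacle here beyond careful bookkeeping; the only subtlety is to decide in advance how many terms of Eq.~(\ref{equ:fnasym}) must be retained. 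Since the claim is made only to order $n^{-1/2}$, keeping $g(n)$ through its constant term is enough, and noticing this before squaring keeps the calculation short.
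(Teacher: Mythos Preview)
Your proposal is correct and follows exactly the route the paper indicates: you divide Eq.~(\ref{equ:thm3}) by $n$ and substitute the asymptotic expansion Eq.~(\ref{equ:fnasym}) for $g(n)$, keeping track of terms through order $n^{-1/2}$. The paper's own proof is nothing more than the one-line remark that this is an elementary calculation from those two equations, so your argument is precisely the intended one, just with the bookkeeping made explicit.
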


A well-known distribution with mean $\sqrt{\pi/\,2}$ and variance $2-\pi/\,2$ ist the Rayleigh distribution with scale parameter $\sigma=1$~\cite{Johnson}. Its cdf $R(x)$ is given by
\beq
\label{equ:Rayleigh}
R(x)=\begin{cases}
1-\exp(-x^2/\,2)& \for x\geq 0,\\
0& \for x<0.
\end{cases}
\eeq
The following proposition shows that this distribution is indeed the asymptotic distribution of $H_n$ for $\ntoinf$.

\begin{prop}
The sequence ($H_n,\ n\in\N$) converges weakly (in distribution) to a random variable with the cdf $R(x)$ in Eq.~(\ref{equ:Rayleigh}):
\beq
\lim_{\ntoinf} Q_n(x)=1-\exp(-x^2/\,2), \for x\geq 0,
\eeq
where $Q_n(x)$ is the cdf of $H_n$ (see also Eq.~(\ref{equ:Gncdf})):
\beq
\label{equ:Hncdf}
Q_n(x) = P_n(x\sqrt{n})=1-\frac{(n-1)\fact}{n^{\chi}\,(n-1-\chi)\fact} \with \chi=\lfloor x\sqrt{n} \rfloor.
\eeq 
\end{prop}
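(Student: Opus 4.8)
The plan is to reduce the statement to an elementary estimate on a finite product. Starting from the closed form in Eq.~(\ref{equ:Hncdf}), write $\chi=\lfloor x\sqrt n\rfloor$ and note that the ratio of factorials is simply $(n-1)(n-2)\cdots(n-\chi)$, so
\[
1-Q_n(x)=\frac{(n-1)\fact}{n^{\chi}\,(n-1-\chi)\fact}=\prod_{k=1}^{\chi}\Bigl(1-\frac{k}{n}\Bigr).
\]
(This also agrees with Eq.~(\ref{equ:thm1step1}): $1-Q_n(x)=\mathbb{P}(G_n\geq\chi+1)=\Pkn$ at $k=\chi+1$; and it has a direct probabilistic meaning, namely $1-Q_n(x)=\mathbb{P}(X_1>1,\dots,X_\chi>\chi)$, the probability that the process survives the first $\chi$ rounds, which by independence equals $\prod_{k=1}^{\chi}(1-k/n)$.) Hence it suffices to prove that $\sum_{k=1}^{\chi}\log(1-k/n)\to -x^2/2$ as $\ntoinf$ for each fixed $x\geq0$; the case $x<0$ is trivial since $H_n\geq0$, and at $x=0$ the product is empty so $Q_n(0)=0=R(0)$.

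For the convergence of this log-sum I would avoid a Taylor expansion with a remainder (which would require uniform control over the index $k$) and instead use the two elementary one-sided bounds valid for $0\leq u\leq\tfrac12$, namely $-u-u^2\leq\log(1-u)\leq-u$. Since $\chi/n\leq x/\sqrt n+1/n\to0$, for all sufficiently large $n$ every factor has $k/n\leq\tfrac12$, and summing gives
\[
-\sum_{k=1}^{\chi}\frac{k}{n}-\sum_{k=1}^{\chi}\frac{k^2}{n^2}\ \leq\ \sum_{k=1}^{\chi}\log\!\Bigl(1-\frac{k}{n}\Bigr)\ \leq\ -\sum_{k=1}^{\chi}\frac{k}{n}.
\]
Now $\sum_{k=1}^{\chi}k/n=\chi(\chi+1)/(2n)$, and because $\chi=\lfloor x\sqrt n\rfloor=x\sqrt n+\mathcal{O}(1)$ this tends to $x^2/2$; similarly $\sum_{k=1}^{\chi}k^2/n^2=\chi(\chi+1)(2\chi+1)/(6n^2)=\mathcal{O}(\chi^3/n^2)=\mathcal{O}(1/\sqrt n)\to0$. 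By the squeeze theorem $\sum_{k=1}^{\chi}\log(1-k/n)\to -x^2/2$, hence $1-Q_n(x)\to e^{-x^2/2}$ and therefore $Q_n(x)\to 1-e^{-x^2/2}=R(x)$ for every $x\geq0$.

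Finally, since the Rayleigh cdf $R$ of Eq.~(\ref{equ:Rayleigh}) is continuous on all of $\mathbb{R}$, the pointwise convergence $Q_n(x)\to R(x)$ at every $x\in\mathbb{R}$ (trivially for $x<0$, as just shown for $x\geq0$) is exactly the definition of weak convergence of $H_n$ to the Rayleigh law with scale parameter $\sigma=1$, which is the assertion of the proposition. There is no deep obstacle here; the only points needing care are the handling of the floor function, i.e.\ that passing from $\chi$ to $x\sqrt n$ in $\chi(\chi+1)/(2n)$ only perturbs the limit by $\mathcal{O}(1/\sqrt n)$, and the uniform control of the logarithm, for which the inequality $-u-u^2\leq\log(1-u)\leq-u$ is tailor-made. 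An alternative route via Stirling's formula applied to $(n-1)\fact/(n-1-\chi)\fact$ is possible but longer and less self-contained.
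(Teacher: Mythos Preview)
Your argument is correct and complete. The product identity $1-Q_n(x)=\prod_{k=1}^{\chi}(1-k/n)$ is immediate from the closed form, the two-sided bound $-u-u^2\le\log(1-u)\le-u$ does hold on $[0,\tfrac12]$, and your handling of the floor via $\chi=x\sqrt n+\mathcal{O}(1)$ in $\chi(\chi+1)/(2n)$ is exactly what is needed.

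The paper proceeds differently: it keeps the factorials as Gamma functions, writes $1-P_n(u)=\exp(L(u))$ with $L(u)=\ln\Gamma(n)-u\ln n-\ln\Gamma(n-u)$, applies Stirling's approximation $\ln\Gamma(z)\sim(z-\tfrac12)\ln z-z+\tfrac12\ln(2\pi)$, and then computes $\lim_{n\to\infty}L(x\sqrt n)=\lim_{n\to\infty}L(x\sqrt n-1)=-x^2/2$ with the aid of a computer algebra system, sandwiching $L(\lfloor x\sqrt n\rfloor)$ between these two. Your route is more elementary and fully self-contained: by passing to the finite product you replace Stirling and the symbolic limit computation with the closed sums $\sum_{k\le\chi}k$ and $\sum_{k\le\chi}k^2$, and the floor is absorbed automatically since $\chi$ already appears as the upper summation index. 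The paper's approach, on the other hand, would more readily yield higher-order terms in $n$ if one wanted a rate of convergence, since Stirling comes with a full asymptotic expansion. Amusingly, you anticipated this comparison in your final sentence: the ``alternative route via Stirling's formula'' that you set aside is precisely what the paper does.
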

\begin{proof}
As $P_n(x)$ is increasing, the following inequality holds:
\beq\label{equ:ineqPn}
P_n(x\sqrt{n}-1) \leq P_n(\lfloor x\sqrt{n} \rfloor) \leq P_n(x\sqrt{n}).
\eeq
We rewrite $P_n$ in the form
\beq
P_n(u)=1-\exp(L(u)),\with L(u)=\ln\Gamma(n)-u\ln n-\ln\Gamma(n-u).
\eeq
According to~\cite[Eq.~(6.1.41)]{AS}, $\ln\Gamma(z)$ can be approximated by
\beq
\ln\Gamma(z) \sim (z-\half)\cdot\ln(z)-z+\half\cdot\ln(2\,\pi), \for z\to\infty.
\eeq
With this approximation, we obtain:
\beq
L(u)=\ln\left(n\right)\,\left(n-\half\right)-u+\ln\left(n-u\right)\,\left(u-n+\half\right)-u\,\ln\left(n\right).
\eeq
Using~\cite{SymTB} and~\cite{Alpha} for the calculation of the limits in Eq.~(\ref{equ:limits}), as well as the fact that $L(u)$ is decreasing, we obtain that
\beq\label{equ:limits}
\lim_{\ntoinf} L(x\sqrt{n}-1)=\lim_{\ntoinf} L(x\sqrt{n})=-\frac{x^2}{2}\folgt
\lim_{\ntoinf} L(\lfloor x\sqrt{n}\rfloor)=-\frac{x^2}{2}.
\eeq
It follows that
\beq\label{equ:conv}
\lim_{\ntoinf} Q_n(x)=\lim_{\ntoinf} P_n(\lfloor x\sqrt{n}\rfloor)=1-\exp(-x^2/\,2).
\eeq
This concludes the proof.
\end{proof}
The convergence in distribution is illustrated by Fig~\ref{fig:conv}. It shows that already for $n=10^4$ it is virtually impossible to visually distinguish $Q_n(x)$ and $R(x)$. A numerical study shows that the maximum absolute difference $d(n)=\max_x |Q_n(x)-R(x)|$ can be parametrized by $d_n\approx 0.44/\sqrt{n}$ for $n\ge100$.

\begin{figure}[t]
\centering
\scalebox{0.90}{\includegraphics[trim=0mm 0mm 0mm 0mm,clip]{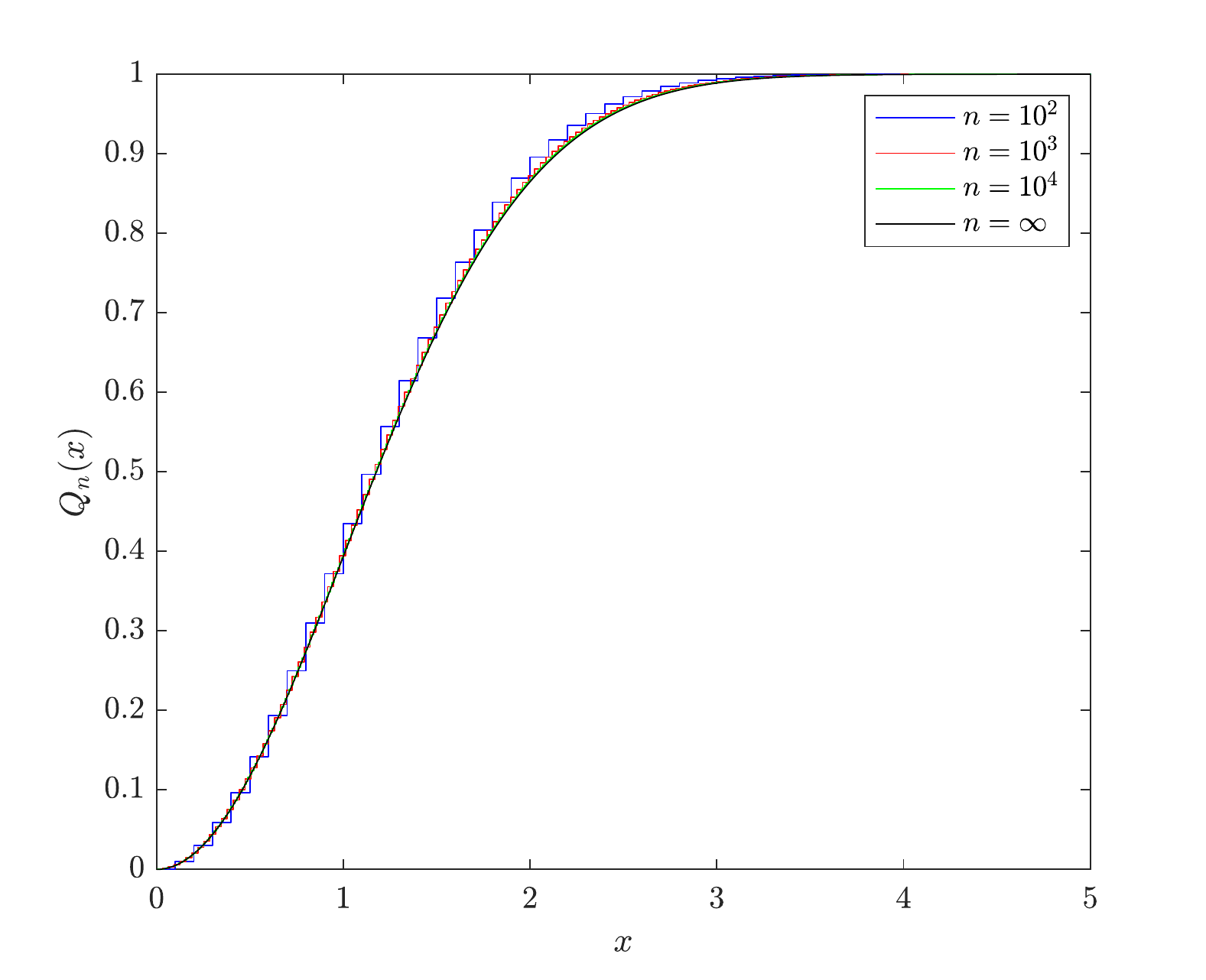}}
\caption[]{Distribution functions of $H_n$ and the limiting Rayleigh cdf.}
\label{fig:conv}
\end{figure}

\section{Conclusions} 
\label{sec:conclude} 

Inspired by an elementary puzzle, we have analyzed the generalisation of a game of dice to a ``hyper-die'' with an arbitrary number $n$ of faces. The gain of a player is described by a novel probability mass function and its corresponding cumulative distribution function. We propose to call the distribution the ``Mlynar distribution'' with parameter $n$.

An empirical study has led to the conjecture that the {expectation of the scaled gain} converges to the constant $\sqrt{\pi/\,2}$. A proof of the conjecture has been found, based on an analytic expression of the gain. A simple expression for the variance of the gain has been derived as well, {thereby proving that the variance of the scaled} {gain converges to $2-\pi/2$}. 

Finally, it has been proved that the scaled gain converges weakly (in distribution) to the Rayleigh distribution with scale parameter~1.

\section*{Acknowledgments}
We thank H.~Hemme for some background information about the invention of the original game. We also thank the anonymous reviewers for useful comments.

\section*{Authors' contributions}
This work was carried out in collaboration of all authors. Authors RM and WM defined the generalized game, mainly contributed Sects. 2 and 3, and wrote the first draft of the manuscript. Author RF fully contributed Sects. 4 and 5, and completed and finalized the manuscript. All authors have read and approved the submitted manuscript.

\section*{Competing interests}
The authors declare that no competing interests exist.

\noindent\scriptsize\----------------------------------------------------------------------------------------------------------------------------------------------
\copyright \it 2021  R. Fr\"uhwirth, R. Malina and W. Mitaroff. This is an Open Access article distributed under the terms of the Creative Commons Attribution License
\href{http://creativecommons.org/licenses/by/2.0}{http://creativecommons.org/licenses/by/2.0},  which permits unrestricted use, distribution, and reproduction in any medium,
provided the original work is properly cited.


\begin{thebibliography}{9}

\bibitem{Hemme}
Hemme~H. Cogito. Bild der Wissenschaft. 2015; 52(11):100. ISSN 0006-2375. German.

\bibitem{Hemme1}
Hemme~H. Private communication. Received 26 February 2021.

\bibitem{Stopping}
Sigman~K. Lecture Notes on Stochastic Modeling I, ch.~3. Accessed 15 February 2021.
Available: \href{http://www.columbia.edu/\string~ks20/stochastic-I/stochastic-I-ST.pdf}{www.columbia.edu/\string~ks20/stochastic-I/stochastic-I-ST.pdf}.

\bibitem{Lyons}
Lyons~L. Statistics for nuclear and particle physicists. Cambridge (UK): Cambridge University Press; 1986.

\bibitem{Alpha}
Wolfram Research. WolframAlpha --- computational intelligence. Accessed 15 February 2021.
Available: \href{https://www.wolframalpha.com}{www.wolframalpha.com}.

\bibitem{AS}
Abramowitz~M, Stegun~I. Handbook of Mathematical Functions. 8\th{} printing. New York: Dover Publications; 1970.

\bibitem{Johnson}
Johnson~NL, Kotz~S, Balakrishnan~N. Continuous Univariate Distributions. Vol.~1, 2\nd{} ed. New York:
John Wiley \& Sons; 1994.

\bibitem{SymTB}
The Mathworks, Inc. MATLAB\textsuperscript{\textregistered} Symbolic Toolbox\texttrademark. Accessed 15 February 2021. Available: 
\href{https://www.mathworks.com/help/symbolic}{www.mathworks.com/help/symbolic}. 

\end{thebibliography}
\end{document}